\numberwithin{equation}{section}
\definecolor{MyLinkColor}{rgb}{0,0,0.4}
\newcommand{\R}{\mathbb R}
\newcommand{\Real}{\mathbb R}
\newcommand{\Z}{\mathbb Z}
\newcommand{\N}{\mathbb N}
\newcommand{\norm}[1]{\left\Vert#1\right\Vert}
\newcommand{\epsi}{\varepsilon}
\newcommand{\brac}[1]{\langle#1\rangle}
\renewcommand{\phi}{\varphi}
\newcommand{\abs}[1]{\left\vert#1\right\vert}
\newtheorem{thm}{Theorem}[section]
\newtheorem{lemma}[thm]{Lemma}
\newtheorem{definition}[thm]{Definition}
\theoremstyle{definition}
\newtheorem{rem}[thm]{Remark}
\title{Symmetric waves are traveling waves}
\subjclass[2000]{76B15; 35Q35}
\keywords{Water waves; symmetry, traveling waves.}
\author[Ehrnstr\"om]{Mats Ehrnstr\"om}
\address{Institut f{\"u}r Angewandte Mathematik, Leibniz Universit{\"a}t Hannover, Welfengarten~1, 30167 Hannover, Germany. }
\email{ehrnstrom@ifam.uni-hannover.de}
\thanks{ME gratefully acknowledges the support of the Swedish Royal Physiographic Society. This paper was written as part of  the international research program on Nonlinear Partial Differential Equations at the Centre for Advanced Study at the Norwegian Academy of Science and Letters in Oslo during the academic year 2008--09.}
\author[Holden]{Helge Holden}
\address{Department of Mathematical Sciences, Norwegian University of Science and Technology,  NO-7491 Trondheim, Norway, and
 Centre of Mathematics for Applications, 
 University of Oslo,
 P.O.\ Box 1053, Blindern,
 NO--0316 Oslo, Norway}
\email{holden@math.ntnu.no}
\author[Raynaud]{Xavier Raynaud}
\address{Department of Mathematical Sciences, Norwegian University of Science and Technology, NO-7491 Trondheim, Norway, and
 Centre of Mathematics for Applications, 
 University of Oslo,
 P.O.\ Box 1053, Blindern,
 NO--0316 Oslo, Norway}
\email{raynaud@math.ntnu.no}
\begin{document}

\begin{abstract}
 We show that horizontally symmetric water waves
 are traveling waves. The result
 is valid for the Euler equations, and is based
 on a general principle that applies to a
 large class of nonlinear partial differential
 equations, including some of the most famous 
 model equations for water waves. A detailed analysis is 
given for weak solutions of the Camassa--Holm equation. 
In addition, we establish the
 existence of nonsymmetric linear rotational
 waves for the Euler equations.
\end{abstract}

\maketitle

%%% SECTION: INTRODUCTION %%%
\section{Introduction}\label{sec:introduction}

This paper is devoted to the relation between
symmetric and traveling water waves. The physical
setting is that of two-dimensional surface gravity
waves propagating in a perfect fluid over a flat bed (cf.~\cite{MR1629555}). 
The fluid is this thus assumed to be inviscid with constant density, 
and the effects of surface tension are neglected.

It is a striking fact that the traveling water waves known to exist are all
symmetric (cf., e.g.,
\cite{MR2027299,MR1422004}).  Symmetry is moreover 
\emph{a priori} guaranteed
for large classes of traveling waves, including those with a 
monotone surface profile between troughs and crests \cite{MR2362244}. 
This raises the intriguing question whether the classes of traveling 
and symmetric waves are identical.

This investigation gives a partial and affirmative answer to that question: we
show
that any horizontally symmetric wave by necessity
has to be a traveling, i.e. steady, wave. That means
that the axis of symmetry moves with constant
speed, and that the shape of the surface remains
unchanged. The result, which is based on a 
general principle, is valid under the assumption that the initial value
problem admits a unique solution. It
applies to a wide class of nonlinear differential
equations, among them the Euler equations, the Korteweg--de Vries, the Camassa--Holm, the Degasperis--Procesi, and the Benjamin--Bona--Mahoney equations.

On the other hand, we also show that within the linear approximation of the full water-wave problem, there do exist nonsymmetric traveling waves. The existence of steady nonsymmetric gravity waves for the full nonlinear problem has been a long-standing open problem. Although the waves we construct do not solve the full Euler equations, they point to the possible existence of their nonlinear counterparts. Thus, while the first result greatly strengthens the relation between symmetric and traveling water waves, the second one indicates that, after all, the two classes may not be identical. 

The paper has the following disposition. In Section \ref{sec:general} we prove the general principle, asserting that horizontally symmetric solutions to a large class of nonlinear evolution equations have to be traveling-wave solutions. For transparency, the proof is carried out for classical solution. The same idea applies to many equations where it is of interest to consider also weak solutions. For example, to include the important class of multipeakon solutions for the Camassa--Holm equation \cite{PhysRevLett.71.1661}, one has to extend the solution space beyond differentiable functions. In Section~\ref{sec:models} we therefore apply it to the Camassa--Holm equation, showing how to modify the argument to accommodate for weak solutions. Section~\ref{sec:euler} contains the corresponding proof for the full Euler equations, assuming that we are in a setting with unique classical solutions. 

Finally, in Section~\ref{sec:asymmetric} we discuss the possibility of nonsymmetric waves for the Euler equations by bifurcation from a nonsymmetric kernel. Some background on symmetry for exact water waves is given, and by means of inverse Sturm--Liouville theory, nonsymmetric solutions for the linearized Euler equations are constructed.

\section{A general principle}\label{sec:general}

The general principle that underlies the results in this paper is best studied in the general setting, assuming smooth solutions of the nonlinear partial differential equation. 

\begin{definition}
 A solution $u$  is $x$-symmetric if there exists a function $\lambda \in C^1(\Real_+)$ such that  for every $t>0$,
 \begin{equation*}
   u(t,x) = u(t, 2\lambda(t)-x)   
 \end{equation*}
 for almost every $x\in\Real$.
 We say that  $\lambda(t)$ is the axis of symmetry.
\end{definition}

Next we show that for a general class of differential equations, $x$-symmetric waves are indeed traveling waves.
%%% COROLLARY: GENERAL %%%
\begin{thm}\label{cor:general}
 Let $P$ be a polynomial. We consider the
 equation
 \begin{equation}\label{eq:general}
   P(\partial_x) u_t = F(u) = \bar F(u, \partial_x u, \ldots, \partial^n_x u), \qquad n \in \N, 
 \end{equation}
 and assume that this equation admits at most one classical
 solution $u(t,x)$ for given initial data
 $u(0,x)$. If $P$ is even and $\bar F$ satisfies
 \begin{equation}
   \label{eq:condF1}
   \bar F(a_0,-a_1,a_2,-a_3,\cdots)=-\bar F(a_0,a_1,a_2,a_3,\cdots)
 \end{equation}
 for all $a_i\in\Real$, or if $P$ is odd and $\bar F$ satisfies
 \begin{equation}
   \label{eq:condF2}
   \bar F(a_0,-a_1,a_2,-a_3,\cdots)=\bar F(a_0,a_1,a_2,a_3,\cdots)
 \end{equation}
 for all $a_i\in\Real$, then any $x$-symmetric solution
 of \eqref{eq:general} is a traveling wave
 solution.
\end{thm}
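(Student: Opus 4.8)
The plan is to exploit the symmetry of the solution together with the uniqueness of solutions to the initial value problem. The key observation is that if $u(t,x)$ solves \eqref{eq:general}, then so does the reflected function $v(t,x) := u(t, 2\lambda(0) - x)$, provided the structural conditions on $P$ and $\bar F$ guarantee that the reflected equation is again \eqref{eq:general}. Since at $t=0$ the $x$-symmetry gives $v(0,x) = u(0, 2\lambda(0)-x) = u(0,x)$ — here one must check that the axis of symmetry may be taken constant, or rather reduce to that case — uniqueness forces $v \equiv u$, so $\lambda(t) \equiv \lambda(0)$ is constant in $t$. But a symmetric wave with a stationary axis of symmetry need not yet be a traveling wave; this only shows the axis does not move. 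The real work, therefore, is to upgrade ``stationary axis of symmetry'' to ``traveling wave.''

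First I would verify the reflection invariance. Writing $x \mapsto 2c - x$ (for a constant $c$) sends $\partial_x \mapsto -\partial_x$, hence $P(\partial_x) \mapsto P(-\partial_x)$, which equals $P(\partial_x)$ when $P$ is even and $-P(\partial_x)$ when $P$ is odd. On the right-hand side, $\partial_x^k u$ picks up a factor $(-1)^k$, so $\bar F(u, \partial_x u, \ldots) \mapsto \bar F(a_0, -a_1, a_2, -a_3, \ldots)$ evaluated at the derivatives of the reflected function. Conditions \eqref{eq:condF1} and \eqref{eq:condF2} are precisely what is needed so that in both the even and the odd case, the factor from $P$ matches the factor from $\bar F$, leaving \eqref{eq:general} invariant under the reflection.

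Next, to pass from the stationary-axis statement to the traveling-wave conclusion, I would look instead at \emph{translated} comparison functions. Fix $t_0>0$ and a shift $a$, and consider $w(t,x) := u(t + t_0, x + a)$, which also solves \eqref{eq:general} (the equation being autonomous and translation-invariant). The symmetry of $u$ about the now-fixed axis $\lambda$, applied at times $t_0$ and $t_0 + t$, should let one choose, for each $t$, a shift $a = a(t)$ so that the initial data of $w$ at $t=0$ agrees with that of $u$ — concretely, one expects $a(t)$ to be forced to be linear in $t$, say $a(t) = ct$, because composing two reflections about $\lambda$ at different times and matching profiles produces a pure translation whose magnitude accumulates additively. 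Uniqueness then gives $u(t+t_0, x + ct) = u(t_0, x)$ for all $t$, i.e.\ $u$ is a traveling wave with speed $-c$.

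The main obstacle I anticipate is the step identifying $a(t)$ and showing it is linear: extracting the traveling-wave velocity from the interplay of the time-independence of $\lambda$ and the spatial symmetry requires a careful bookkeeping of reflections, and one must make sure the chosen shift genuinely makes the initial data coincide (not merely the profiles up to a reflection), so that uniqueness is applicable. A secondary technical point is the regularity and the ``almost every $x$'' in the definition of symmetry — for classical solutions this is harmless, since equality a.e.\ of continuous functions is equality everywhere, but it should be noted. One should also confirm that the hypothesis ``at most one classical solution for given initial data'' is being used only forward in time, consistent with $\lambda \in C^1(\Real_+)$ and $t>0$ in the definition.
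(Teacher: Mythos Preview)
Your first step does not prove what you claim. Reflecting about the fixed axis $\lambda(t_0)$ and invoking uniqueness yields $u(t,x)=u(t,2\lambda(t_0)-x)$ for all $t\ge t_0$; that is, $\lambda(t_0)$ \emph{remains} an axis of symmetry. It does \emph{not} force $\lambda(t)=\lambda(t_0)$: a genuine traveling wave $U(x-ct)$ with $U$ even and $c\neq 0$ has $\lambda(t)=ct+\lambda(0)$, yet is perfectly consistent with your conclusion because a periodic profile has infinitely many axes of symmetry. So either $\lambda$ is not constant, or you have thrown away exactly the information (the value of $\dot\lambda$) that determines the wave speed. Your hedge ``one must check that the axis may be taken constant'' is not a technicality; it is the whole problem.

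The second step is then circular. To apply uniqueness to $w(t,x)=u(t+t_0,x+a)$ you would need $u(t_0,x+a)=u(0,x)$ for some $a$, i.e.\ that the profile at time $t_0$ is a translate of the initial profile. That is precisely the traveling-wave conclusion you are trying to establish; nothing in the symmetry plus ``stationary axis'' gives it to you. The remark about composing reflections producing a translation only tells you $u(t,\cdot)$ is periodic with period $2|\lambda(t)-\lambda(t_0)|$, not that profiles at different times are translates of one another.

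The paper's argument avoids this by differentiating the symmetry identity $u(t,x)=u(t,2\lambda(t)-x)$ in $t$, which brings $\dot\lambda$ into the PDE. Applying $P(\partial_x)\partial_t-F$ to the right-hand representation and using the parity conditions gives, after comparison with the original equation, the pointwise-in-$t$ relation
\[
F(u(t,\cdot))=-\dot\lambda(t)\,P(\partial_x)\partial_x u(t,\cdot).
\]
Thus at any fixed $t_0$ the function $U:=u(t_0,\cdot)$ satisfies the steady equation $-cP(\partial_x)U_x=F(U)$ with $c=\dot\lambda(t_0)$, so $\bar u(t,x):=u(t_0,x-c(t-t_0))$ is a solution of \eqref{eq:general} agreeing with $u$ at $t_0$; uniqueness then gives $u\equiv\bar u$. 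The essential idea you are missing is this use of the \emph{moving} axis, via $\dot\lambda$, to read off the traveling-wave ODE at a fixed time.
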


\begin{rem}
 Property \eqref{eq:condF1} can be rewritten as
 the following condition on $F$:
 \begin{equation}
   \label{eq:condF1b}
   F(u(-x))=-F(u)(-x)
 \end{equation}
 for all smooth functions $u\colon\Real\to\Real$ and
 $x\in\Real$, while property \eqref{eq:condF2} is
 equivalent to
 \begin{equation}
   \label{eq:condF2b}
   F(u(-x))=F(u)(-x)
 \end{equation}
 for all smooth functions $u \colon\Real\to\Real$ and
 $x\in\Real$. One can also check that
 $P(\partial_x)$ satisfies \eqref{eq:condF2b} if
 $P$ is even and \eqref{eq:condF1b} if $P$ is
 odd.
\end{rem} 

%%% REMARK: CLASSICAL SOLUTIONS %%%
\begin{rem}
 We formulate and prove
 Theorem~\ref{cor:general} for classical
 solutions, but whenever a weak formulation is
 available, a more technical argument is required. We illustrate this in the case of the Camassa--Holm equation in the next section, cf. Theorem~\ref{thm:CH}.  
\end{rem}

%%% REMARK: MODEL EQUATIONS %%%
\begin{rem}
 Some equations that fulfill the formal
 requirements of Theorem~\ref{cor:general} are
 the Korteweg--de Vries equation
 \[
 u_t + u_{xxx} + 6uu_x = 0,
 \]
 the Benjamin--Bona--Mahony equation,
 \[
 u_t - u_{xxt} + u_x + u u_x = 0,
 \]
 the Degasperis--Procesi equation,
 \[
 u_t - u_{xxt} + 2\kappa u_x + 4u u_x = 3 u_x u_{xx} + u u_{xxx},
 \]
 and the Kadomtsev--Petviashvili equation,
 \[
 (2u_t + 3 u u_x + \frac{1}{3} u_{xxx})_x + u_{yy} = 0.
 \]
\end{rem}

%%% PROOF OF COROLLARY: GENERAL %%%
\begin{proof}
 We pursue the proof for the case when $P$ is
 even and $F$ satisfies \eqref{eq:condF1b}; the
 proof of the other case is similar.  Observe
 first that if $\bar u(t,x)=U(x-ct)$ is a
 traveling wave, then
 \begin{align*}
   P(\partial_x)\partial_t\bar u=P(\partial_x)(-c\partial_xU(x-ct))=-cP(\partial_x)(\partial_xU)(x-ct)
 \end{align*}
 and
 \begin{align*}
   F(\bar u)=F(U(x-ct))=F(U)(x-ct).
 \end{align*}
 Thus if  $\bar u(t,x)$ is a solution, we find that $U$ satisfies
 \begin{equation}\label{eq:U}
   \big(P(\partial_x) \bar u_t - F(\bar u)\big)(t,x)= \big(- c P(\partial_x) U_x-F(U)\big)(x-ct)=0.
 \end{equation} 
 Thus $\bar u$ is a traveling wave solution if and only if $-cP(\partial_x)(\partial_xU)=F(U)$.

 Let now $u(t,x) = {u(t,2\lambda(t)-x)}$ be an $x$-symmetric solution of $P(\partial_x) u_t = F(u)$. Then 
 \begin{align*}
   0 &= (P(\partial_x) \partial_t - F)(u(t,x))\\ 
   &= (P(\partial_x) \partial_t - F)(u(t,2\lambda(t)-x))\\ 
   &= \left(P(\partial_x) (u_t + 2\dot\lambda(t) u_x) + F(u)\right) \big|_{(t,2\lambda(t)-x)}. 
 \end{align*} 
 where we have used the properties
 \eqref{eq:condF1b}  and \eqref{eq:condF2b} for
 $F$ and $P(\partial_x)$, respectively. In view of the fact that $x$ is
 arbitrary, we infer that
 \[
 P(\partial_x) u_t = F(u) = -P(\partial_x) (u_t + 2\dot\lambda u_x),
 \]
 and hence
 \[
 F(u) = -\dot\lambda P(\partial_x) u_x.
 \]
 Fix a time $t_0$, and define $c=\dot\lambda(t_0)$, and the function
 \begin{equation}
   \bar u(t,x)= u(t_0, x-c(t-t_0)).
 \end{equation}
 Then $\bar u$ is a traveling wave solution since it satisfies equation \eqref{eq:U}, and  it coincides with $u$ at $(t_0,x)$, that is, $\bar u(t_0,x)=u(t_0,x)$.
 From uniqueness with respect to initial
 data, it follows that $u(t,x) =u(t,2\lambda(t) - x) =
 u(t_0, x-c(t-t_0))=\bar u(t,x)$ for all $t$.  Observe that we find the explicit expression  $u(t,x)=U(x-ct)$ with
 $U(z)=u(t_0,z+ct_0)$. 
\end{proof}

%%% SECTION: MODELS %%%
\section{The Camassa--Holm equation}\label{sec:models}
The Camassa--Holm equation is given by
\begin{equation}\label{eq:CH}
 (1-\partial_x^2)u_t + 2 k u_x + u (1-\partial_x^2)u_x + 2 u_x (1-\partial_x^2) u = 0.
\end{equation}
This equation has been extensively studied due to
its amazing properties, suffice it to mention here
that it is completely integrable with a Lax pair
and infinitely many conserved quantities \cite{MR2125239}, and
enjoys wave breaking in finite time \cite{MR1668586}. The
continuation of solutions past wave breaking has
turned out to be subtle, and important
non-uniqueness issues have to be addressed \cite{MR2278406,MR2288533,1136.35080,HR09}.
Furthermore, its precise meaning as a model for
water waves has recently been clarified \cite{CL2009}, and it has an interesting geometric
interpretation \cite{MR1930889}. 

An important class of
solutions of the Camassa--Holm equation consists of
multipeakons. These solutions can be written as a
finite number of peakons, each behaving like $c
e^{-\abs{x-ct}}$ close to the peak, and
interacting in a particle-like fashion. Observe
that multipeakons are not differentiable, and
hence it is required to study weak solutions.

Let $\R_+ := (0,\infty)$. We use the following
definition of weak solutions of the Camassa--Holm equation.
\begin{definition}
 \label{def:defweak}
 A function $u(t,x)$ is a weak solution of the
 Camassa--Holm equation if $u\in
 C(\R_+,H^1(\Real))$ and
 \begin{equation}\label{eq:weakCH}
   \iint_{\Real_+\times\Real} \left[ u ((1-\partial_x^2) \varphi_t + 2k \varphi_x) + u^2 \left(\frac{3 \varphi_x }{2}  - \frac{\varphi_{xxx}}{2} \right) +  u_x^2 \frac{\varphi_x}{2}  \right] \, dt\, dx = 0, 
 \end{equation}
 for all $\phi\in C_0^\infty(\R_+\times\R)$.
\end{definition}
\begin{rem}
 \label{rem:sweqw}
 Since $C_0^\infty(\Real_+\times\Real)$ is dense
 in $C_0^1(\Real_+,C_0^3(\Real))$, one can prove
 by a density argument that Definition
 \ref{def:defweak} is unchanged if one only considers
 test functions in
 $C_0^1(\Real_+,C^3_0(\Real))$.
\end{rem}
The Camassa-Holm equation is not well-posed in
$C([0,T],H^1(\Real))$: Given $T>0$, one can
allways construct a solution $u(t,x)$ and a
sequence of solutions $u^n(t,x)$ such that
$\lim_{n\to\infty}u^n(0,\cdot)=u(0,\cdot)\text{ in
}H^1(\Real)$ and
$\norm{u^n-u}_{C([0,T],H^1(\Real))}\geq1$.
However, we have the following well-posedness
results when considering particular cases of
initial data:\\[2mm]
Case (i) If $u_0\in H^s$ for $s>3/2$ then there
exists $T>0$ depending only on $\norm{u_0}_{H^s}$
such that there exists a unique solution $u\in
C([0,T],H^s)$, cf. \cite{MR1851854}.\\[2mm]
Case (ii) If $u_0\in H^1$ and $u_0-u_{0,xx}$ is a
positive Radon measure, then there exists a unique
global solution, see \cite{1002.35101, MR2219220}.\\[2mm]
In both cases one has to consider weak
solutions. The first case excludes multipeakons as
they satisfy $u(t)\in H^s$ for $s<3/2$, while the
second case includes multipeakons of definite sign
(that is, only peakons, no antipeakons). Using the
bracket notation for distributions,
\eqref{eq:weakCH} rewrites as
\begin{equation}\label{eq:weakCH2}
 \brac{u,(1-\partial_x^2) \varphi_t + 2k \varphi_x} + \brac{u^2, \frac{3 \varphi_x }{2}  - \frac{\varphi_{xxx}}{2}} + \brac{u_x^2, \frac{\varphi_x}{2}}= 0. 
\end{equation}
We are interested in the weak solutions of the Camassa--Holm
equation which are $x$-symmetric. 
For such solutions, the following theorem holds.
%%% THEOREM: CH %%%
\begin{thm}\label{thm:CH}
 Let $u$ be a weak solution of the Camassa--Holm
 equation with initial data such that the
 equation is locally
 well-posed (case (i) or (ii) above). If $u$ is $x$-symmetric, then it is a traveling wave.
\end{thm}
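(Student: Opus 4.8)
The plan is to carry out the argument of Theorem~\ref{cor:general} at the level of the weak formulation~\eqref{eq:weakCH2}, the new feature being that, since the solution may be nonsmooth, one reflects the \emph{test functions} rather than the solution. Put $m:=(1-\partial_x^2)u\in C(\R_+,H^{-1}(\R))$, and for $\phi\in C_0^\infty(\R_+\times\R)$ set $\psi(t,x):=\phi(t,2\lambda(t)-x)$. As $\lambda\in C^1(\R_+)$, the function $\psi$ is smooth and compactly supported in $x$ and of class $C^1$ in $t$, hence belongs to $C_0^1(\R_+,C_0^3(\R))$ and is an admissible test function by Remark~\ref{rem:sweqw}. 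First I would insert $\psi$ into~\eqref{eq:weakCH} and change variables $x\mapsto2\lambda(t)-x$ in the inner integral, using
\[
 \psi_x(t,x)=-\phi_y(t,2\lambda(t)-x),\qquad \psi_{xxx}(t,x)=-\phi_{yyy}(t,2\lambda(t)-x),
\]
\[
 (1-\partial_x^2)\psi_t(t,x)=\big[(1-\partial_y^2)\phi_t+2\dot\lambda(t)(1-\partial_y^2)\phi_y\big](t,2\lambda(t)-x),
\]
together with the symmetry relations $u(t,2\lambda(t)-x)=u(t,x)$ and, upon differentiating, $u_x(t,2\lambda(t)-x)=-u_x(t,x)$, so that $u^2$ and $u_x^2$ are invariant under the reflection. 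This yields a second weak identity obeyed by $u$ when tested against $\phi$.

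Adding this identity to~\eqref{eq:weakCH} and, separately, subtracting it, and then rewriting everything in terms of $m$ by integration by parts, I expect to obtain the two distributional identities
\[
 m_t+\dot\lambda(t)\,m_x=0,\qquad 2ku_x+um_x+2u_xm=\dot\lambda(t)\,m_x
\]
on $\R_+\times\R$; they are consistent with~\eqref{eq:weakCH2} in the form $m_t+2ku_x+um_x+2u_xm=0$, and the second is the weak counterpart of the relation $F(u)=-\dot\lambda\,P(\partial_x)u_x$ from the proof of Theorem~\ref{cor:general}, the change of variables above playing the role of~\eqref{eq:condF1b}--\eqref{eq:condF2b}. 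From the transport identity, the $C^1$ change of variables $(t,x)\mapsto(t,x-(\lambda(t)-\lambda(t_0)))$ reduces it to $\partial_t\tilde m=0$; hence $\tilde m$ is independent of $t$, and, undoing the change and applying the translation-invariant operator $(1-\partial_x^2)^{-1}$, one gets $u(t,x)=u\big(t_0,x-(\lambda(t)-\lambda(t_0))\big)$ throughout, for every $t_0$.

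It remains to see that the shift is affine in $t$, i.e.\ that $\dot\lambda$ is constant. Fix $t_0$ and put $c:=\dot\lambda(t_0)$. After writing $u_xm=\tfrac12\partial_x(u^2-u_x^2)$ and $um_x=\partial_x(um)-u_xm$ with $u^2,u_x^2\in C(\R_+,L^1(\R))$, every term of the second identity is continuous in $t$ with values in a fixed space of distributions on $\R$, so it may be restricted to the slice $t=t_0$: the profile $U:=u(t_0,\cdot)$ satisfies $2kU_x+UM_x+2U_xM=cM_x$ with $M:=(1-\partial_x^2)U$. A direct computation --- the weak analogue of~\eqref{eq:U} --- shows that this is precisely the condition for $\bar u(t,x):=u(t_0,x-c(t-t_0))$ to be a weak solution of the Camassa--Holm equation; moreover $\bar u\in C(\R_+,H^1)$ and its value $u(t_0,\cdot)$ at $t_0$ is again a datum of the type in case (i) or (ii) (for case (ii) using that positivity of $u-u_{xx}$ is preserved by the flow), so the uniqueness assumption forces $u=\bar u$ near $t_0$ (globally in case (ii)). Comparing the resulting $u(t,x)=u(t_0,x-c(t-t_0))$ with the formula $u(t,x)=u(t_0,x-(\lambda(t)-\lambda(t_0)))$ of the previous paragraph, and using that a nonzero $H^1(\R)$ function has no nonzero period, I conclude $\lambda(t)-\lambda(t_0)=c(t-t_0)$ near $t_0$; thus $\dot\lambda$ is locally constant, hence constant, and the transport formula (with $t_0$ replaced by $0$) becomes $u(t,x)=u(0,x-ct)$, a traveling wave. (The degenerate case $U\equiv0$, forcing $u\equiv0$, is trivial.)

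The conceptual skeleton is that of Theorem~\ref{cor:general}; the difficulty is in the bookkeeping. The main obstacle will be to make the two distributional identities rigorous and, above all, to justify restricting the second one to a fixed time, which requires tracking the low regularity of the quadratic terms $u^2,u_x^2$ and of $m,m_x$ in suitable negative-order spaces and exploiting the continuity $u\in C(\R_+,H^1)$. A secondary point, already flagged, is that $\psi$ is only $C^1$ in $t$ (inheriting the regularity of $\lambda$), so Remark~\ref{rem:sweqw} is genuinely needed to license it, and one must check that $\psi$ is compactly supported in $\R_+\times\R$, which is immediate from the continuity of $\lambda$.
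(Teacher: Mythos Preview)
Your approach is essentially the paper's: reflect the test function via $\phi\mapsto\phi(t,2\lambda(t)-x)$, use the symmetry of $u$, $u^2$, $u_x^2$ to obtain a second weak identity, combine it with \eqref{eq:weakCH2} to extract the steady profile equation at a fixed time $t_0$ with $c=\dot\lambda(t_0)$, verify that $\bar u(t,x)=u(t_0,x-c(t-t_0))$ is a weak solution (the paper isolates this as Lemma~\ref{lem:trav}), and invoke uniqueness.

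Two remarks on the execution. First, the transport identity $m_t+\dot\lambda\,m_x=0$ you obtain by \emph{adding} the two weak identities, and the subsequent comparison of the formulae $u(t,x)=u(t_0,x-(\lambda(t)-\lambda(t_0)))$ and $u(t,x)=u(t_0,x-c(t-t_0))$ to force $\dot\lambda$ constant, are correct but unnecessary: once uniqueness gives $u=\bar u$, you are done, since $\bar u$ is already a traveling wave. The paper uses only the subtracted identity. Second, the paper stays entirely within the weak formulation involving $u$, $u^2$, $u_x^2$, thereby avoiding the distributional products $um_x$, $u_xm$ you introduce; and it makes the restriction to $t=t_0$ explicit by testing against $\psi(x)\rho_\varepsilon(t)$ with $\rho_\varepsilon\to\delta_{t_0}$ and using $u\in C(\R_+,H^1)$, which is the concrete version of your sentence ``every term is continuous in $t$ with values in a fixed space of distributions''. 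Your rewriting $u_xm=\tfrac12\partial_x(u^2-u_x^2)$ etc.\ is viable, but the paper's bookkeeping is cleaner.
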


The next lemma provides a sufficient condition on
the initial conditions of the traveling waves.
\begin{lemma}
 \label{lem:trav}
 If $U\in H^1(\R)$ satisfies
 \begin{equation}\label{eq:steadytwo}
   \int_\Real\Big( U(2k-c(1-\partial_x^2)) \psi_x +  U^2\big(\frac32\psi_x  -\frac12\psi_{xxx}\big) + \frac12U_x^2\psi_x\Big)\, dx= 0,
 \end{equation}
 for all $\psi\in C_0^\infty(\R)$, then $u$ given
 by
 \begin{equation}
   \label{eq:uUxmct}
   u(t,x)=U(x-c(t-t_0))
 \end{equation}
 is a weak solution of the Camassa--Holm equation, for any $t_0\in\Real$.
\end{lemma}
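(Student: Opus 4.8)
The plan is to substitute the traveling-wave ansatz \eqref{eq:uUxmct} into the weak formulation \eqref{eq:weakCH} and show that the resulting identity reduces precisely to the steady equation \eqref{eq:steadytwo}. First I would check the regularity requirement: since $U\in H^1(\Real)$, the function $u(t,x)=U(x-c(t-t_0))$ is indeed in $C(\Real_+,H^1(\Real))$, so it is an admissible candidate for Definition~\ref{def:defweak}. The core of the argument is then a change of variables. Fix $\phi\in C_0^\infty(\Real_+\times\Real)$ and, in the double integral \eqref{eq:weakCH}, substitute $y=x-c(t-t_0)$ at each fixed $t$; then $u=U(y)$, $u_x=U'(y)$, and all $x$-derivatives of $\phi$ become $y$-derivatives of $\phi(t,y+c(t-t_0))$. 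The only subtlety is the time derivative: by the chain rule $\partial_t\big[\phi(t,y+c(t-t_0))\big]=\phi_t(t,y+c(t-t_0))+c\,\phi_x(t,y+c(t-t_0))$, so $\phi_t$ evaluated at the old point equals $\partial_t\big[\phi(t,y+c(t-t_0))\big]-c\,\phi_x$ evaluated in the new variables.

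Writing $\psi(t,y):=\phi(t,y+c(t-t_0))$ — which for each fixed $t$ lies in $C_0^\infty(\Real)$, and is $C_0^1$ in $t$ — the integrand \eqref{eq:weakCH} becomes
\[
 U\big((1-\partial_y^2)(\psi_t - c\psi_y) + 2k\psi_y\big) + U^2\Big(\tfrac32\psi_y - \tfrac12\psi_{yyy}\Big) + U_y^2\,\tfrac{\psi_y}{2}.
\]
Grouping the terms that carry a $\psi_t$ versus those that do not, this splits as $\partial_t$ of something plus the spatial expression appearing in \eqref{eq:steadytwo} applied to $\psi(t,\dott)$. Concretely, the $\psi_t$-part is $U(1-\partial_y^2)\psi_t = \partial_t\big[U(1-\partial_y^2)\psi\big]$ (since $U$ is $t$-independent), and its integral over $\Real_+\times\Real$ vanishes because $\psi$ has compact support in $t\in\Real_+$ — here one uses Fubini together with the fundamental theorem of calculus in $t$. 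The remaining, purely spatial part is exactly
\[
 \int_\Real\Big(U\big(2k - c(1-\partial_x^2)\big)\psi_x + U^2\big(\tfrac32\psi_x - \tfrac12\psi_{xxx}\big) + \tfrac12 U_x^2\,\psi_x\Big)\,dx,
\]
with $\psi = \psi(t,\dott)$ playing the role of the test function, which is zero by hypothesis \eqref{eq:steadytwo} for every fixed $t$; integrating in $t$ gives $0$.

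One should also verify the converse direction of the correspondence is not needed: we only need that \eqref{eq:steadytwo} implies \eqref{eq:weakCH}, which is what the above chain gives. The main obstacle, such as it is, is bookkeeping: one must be careful that the map $\phi\mapsto\psi$ is a bijection of $C_0^\infty(\Real_+\times\Real)$ onto itself (it is, with inverse $\psi\mapsto\psi(t,y-c(t-t_0))$), so that testing \eqref{eq:steadytwo} against all $\psi(t,\dott)$ for each $t$ is legitimate, and that the integrations by parts moving $\partial_y^2$ and $\partial_y^3$ onto $\psi$ are justified — these are fine since $U\in H^1$ pairs against $C_0^\infty$ test functions exactly as in \eqref{eq:weakCH2}. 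No genuine analytic difficulty arises; the lemma is essentially a coordinate-change identity, and I would present it as such, relegating the integration-by-parts and Fubini steps to a sentence each.
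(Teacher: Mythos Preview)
Your proposal is correct and follows essentially the same route as the paper's own proof: both verify $u\in C(\Real_+,H^1(\Real))$, introduce the shifted test function $\psi(t,y)=\phi(t,y+c(t-t_0))$ (the paper writes this as $\phi_c$), use the chain-rule/commutation identity $(\phi_c)_t=(\phi_t)_c+c(\phi_x)_c$ to isolate a time-derivative term that integrates to zero by compact support, and then apply \eqref{eq:steadytwo} with $\psi=\psi(t,\dott)$ for each fixed $t$. The only cosmetic difference is that the paper phrases the change of variables via the distributional pairing $\brac{u,\phi}=\brac{U,\phi_c}$ rather than an explicit substitution, and it spells out the continuity of translation in $H^1$ via the Fourier transform.
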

\begin{proof} We can assume without loss of
 generality that $t_0=0$. 
By using the Fourier transform, it is immediate that the translation map $a \mapsto U(x+a)$ is continuous  $\R \to H^1(\R)$. Since $t \mapsto c(t-t_0)$ is real analytic, it thus follows that $u$ as given by \eqref{eq:uUxmct} belongs to $C(\R,H^1(\R))$. 

 For any function $\phi\in
 C_0^\infty(\R_+\times\R)$, we have that
 \begin{equation*}
   \brac{u,\phi}=\brac{U,\phi_c},\  \brac{u²,\phi}=\brac{U²,\phi_c}
   \text{ and } \brac{u_x^2,\phi}=\brac{U_x^2,\phi_c}
 \end{equation*}
 where we denote
 \begin{equation*}
   \phi_c=\phi(t,x+ct).
 \end{equation*}
 The following commutation identities 
 \begin{equation}
   \label{eq:com1}
   (\phi_c)_t=(\phi_t)_c+c(\phi_x)_c
 \end{equation}
 and
 \begin{equation}
   \label{eq:com2}
   (\phi_c)_x=(\phi_x)_c
 \end{equation}
 are easily seen to be valid.
 Let us check that \eqref{eq:weakCH} holds. By
 using \eqref{eq:com1} and \eqref{eq:com2}, we
 obtain
 \begin{align}
   \notag
   \brac{u,(1-\partial_{xx})\partial_t\phi+2k\phi_x}&=\brac{U,\big((1-\partial_{xx})\partial_t\phi+2k\phi_x\big)_c}\\
   \label{eq:perpr1}
   &=\brac{U,(1-\partial_{xx})(\partial_t\phi_c-c\partial_x\phi_c)+2k\partial_x\phi_c}
 \end{align}
 and
 \begin{align}
   \notag
   \brac{u^2, \frac{3 \varphi_x }{2}  - \frac{\varphi_{xxx}}{2}}+\brac{u_x^2, \frac{\varphi_x}{2}}&=\brac{U^2, \left(\frac{3 \varphi_x }{2}  - \frac{\varphi_{xxx}}{2}\right)_c}+\brac{U_x^2, (\frac{\varphi_x}{2})_c}\\
   \label{eq:perpr2}
   &=\brac{U^2, \frac{3 \partial_x\varphi_c }{2}  - \frac{\partial_{x}^3\varphi_c}{2}}+\brac{U_x^2, \frac{\partial_x\varphi_c}{2}}.
 \end{align}
 Since $U$ is independent of time, we obtain that
 \begin{align*}
   \brac{U,(1-\partial_{xx})\partial_t\phi_c}&=\int_{\Real}U(x)\int_{\Real_+}\partial_t(1-\partial_{xx})\phi_c\,dt\,dx\\
   &=\int_{\Real}U(x)[(1-\partial_{xx})\phi_c(T,x)-(1-\partial_{xx})\phi_c(0,x)]\,dx\\
   &=0
 \end{align*}
 for $T$ large enough so that it does not belong
 to the support of $\phi_c$.  Collecting the above results, we find

 {\allowdisplaybreaks
   \begin{align*} 
     & \brac{u,(1-\partial_x^2) \varphi_t + 2k \varphi_x} + \brac{u^2, \frac{3 \varphi_x }{2}  - \frac{\varphi_{xxx}}{2}} +  \brac{u_x^2, \frac{\varphi_x}{2}} \\
     &\quad =   \brac{U,(1-\partial_{xx})(\partial_t\phi_c-c\partial_x\phi_c)+2k\partial_x\phi_c} \\
     &\qquad+  \brac{U^2, \frac{3 \partial_x\varphi_c }{2}  - \frac{\partial_{x}^3\varphi_c}{2}}+\brac{U_x^2, \frac{\partial_x\varphi_c}{2}} \\
     &\quad =   \brac{U,(2k-c(1-\partial_{xx}))\partial_x\phi_c}+    \brac{U^2, \frac{3 \partial_x\varphi_c }{2}  - \frac{\partial_{x}^3\varphi_c}{2}}+\brac{U_x^2, \frac{\partial_x\varphi_c}{2}}\\
     & \quad =\int_{\Real_+}\int_\Real\bigg(U(x)(2k-c(1-\partial_{xx}))\partial_x\phi_c(t,x)
     \\
     &\qquad+U^2(x)(\frac{3 \partial_x\varphi_c(t,x)}{2}-\frac{\partial_{x}^3\varphi_c(t,x)}{2})+U_x^2(t,x)\frac{\partial_x\varphi_c(t,x)}{2}\bigg)\,dx\,dt\\
     & \quad =0
   \end{align*}  
 }
 by using \eqref{eq:steadytwo} with
 $\psi(x)=\phi_c(t,x)$, which, for each given
 $t\geq0$, belongs to $C_0^\infty(\Real)$. Hence,
 \eqref{eq:weakCH} is satisfied and the lemma
 is proven.
\end{proof}
%%% REMARK: LENELLS %%%
\begin{rem}
 In a series of papers
 \cite{1069.35072,1082.35127,pre05052844},
 Lenells has characterized the traveling wave
 solutions of some model equations, among them
 the Camassa--Holm equation. The solutions found include
 very exotic shapes, but when restricted to
 smooth solutions, the situation is clearer. In
 particular, the smooth solutions are all
 symmetric around the crest, wherefrom they decay
 either to the trough (in the periodic case), or
 to a flat profile at infinity (in the case of
 solitary waves). Though not stated as a result
 this is mentioned in passing in
 \cite[p. 410]{1082.35127}. It follows from the
 fact that, after integration, the steady solutions
 of the Camassa--Holm equation, $\varphi(x)$, satisfy
 \[
 \varphi_x = \frac{\varphi^2 ( c - 2k - \varphi) +
   a\varphi + b}{c-\varphi},
 \]
 where $a,b \in \R$ are integration constants. As a
 consequence of this result, for smooth enough waves we obtain
 identity between traveling and symmetric waves
 for the Camassa--Holm equation.
\end{rem}

%%% PROOF OF THEOREM:CH %%%
\begin{proof}[Proof of Theorem \ref{thm:CH}]
 We introduce the notation
 \begin{equation*}
   \varphi_\lambda(t,x)= \varphi(t, 2\lambda(t)-x)  
 \end{equation*}
 Consider test functions $\phi\in
 C_0^1(\Real_+,C_0^3(\Real))$. As noted in Remark
 \ref{rem:sweqw}, equation \eqref{eq:weakCH2}
 remains valid for such test functions. The space
 $C_0^1(\Real_+,C_0^3(\Real))$ is invariant under
 the transformation $\phi\mapsto\phi_\lambda$
 because $\lambda\in C^1(\Real)$. This
 transformation is a bijection as we have
 $((\phi)_\lambda)_\lambda=\phi$.  If $u$ is an
 $x$-symmetric solution of the Camassa--Holm equation, we
 have that
 \begin{equation*}
   \brac{u,\phi}=\brac{u_\lambda,\phi}=\brac{u,\phi_\lambda}.
 \end{equation*}
 Similar identities hold for $u^2$ and
 $u_x^2$. Then, \eqref{eq:weakCH2} implies that
 \begin{multline}
   \label{eq:chwl}
   \brac{u,\left((1-\partial_x^2)\partial_t \varphi + 2k\partial_x\varphi\right)_\lambda}\\ + \brac{u^2, \left(\frac{3 \partial_x\varphi}{2}  - \frac{\partial_x^3\varphi}{2}\right)_\lambda} +  \brac{u_x^2, \left(\frac{\partial_x\varphi}{2}\right)_\lambda}= 0
 \end{multline}
 The following commutation rules,
 \begin{equation}
   \label{eq:lcom1}
   \partial_t(\phi_\lambda)=(\partial_t\phi)_\lambda-2\dot\lambda(\partial_x\phi)_\lambda
 \end{equation}
 and
 \begin{equation}
   \label{eq:lcom2}
   \partial_x(\phi_\lambda)=-(\partial_x\phi)_\lambda,
 \end{equation}
 hold, where $\dot\lambda$ denotes the time derivative of $\lambda$.  By using \eqref{eq:lcom1} and \eqref{eq:lcom2},
 \eqref{eq:chwl} yields
 \begin{multline}
   \label{eq:chwlam}
   \brac{u,(1-\partial_x^2)\partial_t\phi_\lambda-2\dot\lambda(1-\partial_x^2)\partial_x\phi_\lambda-2k\partial_x\phi_\lambda}\\+\brac{u^2,-\frac{3\partial_x\phi_\lambda}2+\frac{\partial_x^3\phi_\lambda}2}+\brac{u_x^2,-\frac{\partial_x\phi_\lambda}2}=0.
 \end{multline}
 Hence, by taking $\phi$ equal to $\phi_\lambda$
 in \eqref{eq:chwlam}, as
 $(\phi_\lambda)_\lambda=\phi$, we obtain
 \begin{multline}
   \label{eq:chlnol}
   \brac{u,(1-\partial_x^2)\partial_t\phi-2\dot\lambda(1-\partial_x^2)\partial_x\phi-2k\partial_x\phi}\\+\brac{u^2,-\frac32\partial_x\phi+\frac12\partial_x^3\phi}+\brac{u_x^2,-\frac12\partial_x\phi}=0.
 \end{multline}
 After substracting \eqref{eq:chlnol} from
 \eqref{eq:weakCH2}, we get
 \begin{multline}
   \label{eq:sublor}
   \brac{u,2\dot\lambda(1-\partial_{xx})\partial_x\phi+4k\partial_x\phi}\\
   +\brac{u^2,3\partial_x\phi-\partial_x^3\phi}+\brac{u_x^2,\partial_x\phi}=0.
 \end{multline}
 We consider a fixed but arbitrary time $t_0>0$. For
 any $\psi\in C_0^\infty(\Real)$, let us consider
 the sequence of functions
 $\phi_\epsi(t,x)=\psi(x)\rho_\epsi(t)$ where
 $\rho_\epsi\in C_0^\infty(\Real_+)$ is a
 mollifier with the property that $\rho_\epsi$
 tends to $\delta(t-t_0)$, the Dirac mass at
 $t_0$, when $\epsi$ tends to zero. From
 \eqref{eq:sublor}, by using the test function
 $\phi_\epsi$, we get
 \begin{equation}\label{eq:rhoepsi}
   \begin{aligned}
     &\int_\Real\left(      2(1-\partial_{xx})\partial_x\psi \int_{\Real_+} \dot\lambda\, u(t,x)\rho_\epsi(t)\,dt \right)\,dx\\ 
     &\quad+ \int_\Real\left(  4k\partial_x\psi \int_{\Real_+}u(t,x)\rho_\epsi(t)\,dt    \right)\,dx\\
     &\quad+\int_\Real\left((3\partial_x\psi-\partial_x^3\psi)\int_{\Real_+}u^2(t,x)\rho_\epsi(t)\,dt\right)\,dx\\
     &\quad+\int_\Real\left(\frac12\partial_x\psi\int_{\Real_+}u_x^2(t,x)\rho_\epsi(t)\,dt\right)\,dx=0.
   \end{aligned}
 \end{equation}
 Since, by assumption, $u\in
 C(\Real_+,H^1(\Real))$, we have that
 \begin{equation*}
   \lim_{\epsi\to0}\int_{\Real_+}u(t,x)\rho_\epsi(t)\,dt=u(t_0,x) 
 \end{equation*}
 in $L^2(\Real)$ and 
 \begin{equation*}
   \lim_{\epsi\to0}\int_{\Real_+}u^2(t,x)\rho_\epsi(t)\,dt=u^2(t_0,x)
 \end{equation*}
 and
 \begin{equation*}
   \lim_{\epsi\to0}\int_{\Real_+}u_x^2(t,x)\rho_\epsi(t)\,dt=u_x^2(t_0,x)
 \end{equation*}
 in $L^1(\Real)$. Therefore, by letting $\epsi$
 tend to zero, \eqref{eq:rhoepsi} implies that
 $u(t_0,x)$ satisfies \eqref{eq:steadytwo} for
 $c=\dot\lambda(t_0)$. Lemma \ref{lem:trav} then
 implies that $\bar
 u(t,x)=u(t_0,x-\dot\lambda(t_0)(t-t_0))$ is a
 traveling wave solution of the Camassa--Holm
 equation. Since $\bar u$ and $u$ coincide for
 $t=t_0$, we have, by uniqueness of the solution,
 that $\bar u(t,x)=u(t,x)$ for all time $t$, and
 $u$ is a traveling wave solution.
\end{proof}

%%% SECTION: EULER %%%
\section{The Euler equations}\label{sec:euler}
In this section we prove that the assertion of Theorem~\ref{cor:general} holds for the full water wave problem. For classical meaning of the analysis to come, let the functions $u,v,P \in C^1(\R_+,C^2(\R))$ and $\eta \in C^1(\R_+ \times \R)$. The governing equations for a perfect fluid include the Euler equations,
\begin{equation}\label{eq:euler}
 \begin{aligned}
   u_t + u u_x + v u_y &= - P_x,\\
   v_t + u v_x + v v_y &= - P_y - g,
 \end{aligned}
\end{equation}
together with the assumption of incompressibility,   
\begin{equation}\label{eq:incompressible}
 u_x + v_y = 0,
\end{equation}
in the interior of the fluid domain. In addition we require that
\begin{align}
 v &= \eta_t + \eta_x u \label{eq:kinematic},\\
 P &= P_0\label{eq:dynamic},
\end{align}
on the free surface $y=\eta(t,x)$, and that
\begin{equation}\label{eq:bed}
 v=0
\end{equation}
along the flat bed $y=-d$. In total those equations describe the gravity-governed motion of an incompressible fluid of constant density, propagating on a flat bed with no mixing of air and liquid (for more details, cf. \cite{MR1629555}).

%%% THEOREM: EULER %%%
\begin{thm}\label{thm:euler}
 Any horizontally symmetric solution of the exact water wave problem~\eqref{eq:euler}--\eqref{eq:bed} constitutes a steady wave.
\end{thm}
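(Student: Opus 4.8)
The plan is to reproduce, in the free-boundary setting, the scheme behind Theorem~\ref{cor:general}: use the reflection symmetry together with the equations to show that at each fixed time the flow already satisfies the \emph{steady} water-wave equations, and then invoke uniqueness for the initial value problem to conclude that the solution is globally a traveling wave. By a \emph{horizontally symmetric} solution I understand, besides the symmetric surface $\eta(t,x)=\eta(t,2\lambda(t)-x)$ with axis $\lambda\in C^1(\R_+)$, the relations
\begin{equation*}
 u(t,x,y)=u(t,2\lambda(t)-x,y),\qquad v(t,x,y)=-v(t,2\lambda(t)-x,y),\qquad P(t,x,y)=P(t,2\lambda(t)-x,y);
\end{equation*}
in particular the fluid domain $\{-d<y<\eta(t,x)\}$ is invariant under $x\mapsto 2\lambda(t)-x$, so that both the interior equations and the surface conditions are available at any point and at its mirror image.

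First I would carry out the reflection computation. Writing $\xi=2\lambda(t)-x$ and differentiating the symmetry relations, one expresses every term of \eqref{eq:euler}, \eqref{eq:incompressible} and \eqref{eq:kinematic} at $(t,x,y)$ through the values at the mirror point $(t,\xi,y)$; for instance $u_t(t,x,y)=u_t(t,\xi,y)+2\dot\lambda u_x(t,\xi,y)$, $u_x(t,x,y)=-u_x(t,\xi,y)$, $v_x(t,x,y)=v_x(t,\xi,y)$, $P_x(t,x,y)=-P_x(t,\xi,y)$, and analogously for the vertical component and for $\eta$. Substituting these identities into the equations and adding the same equations evaluated directly at $(t,\xi,y)$, the quadratic advection terms and the pressure terms cancel and one is left with the transport relations
\begin{equation*}
 u_t+\dot\lambda u_x=0,\qquad v_t+\dot\lambda v_x=0 \quad\text{in the fluid},\qquad \eta_t+\dot\lambda\eta_x=0 \quad\text{on } y=\eta(t,x).
\end{equation*}
The constraint \eqref{eq:incompressible} and the conditions \eqref{eq:dynamic}, \eqref{eq:bed} turn out to be automatically consistent with the symmetry and produce no further information.

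Next I would freeze a time $t_0>0$ and set $c=\dot\lambda(t_0)$, so that $u_t(t_0,\cdot)=-c\,u_x(t_0,\cdot)$, $v_t(t_0,\cdot)=-c\,v_x(t_0,\cdot)$ and $\eta_t(t_0,\cdot)=-c\,\eta_x(t_0,\cdot)$. Defining the translates
\begin{equation*}
 \bar u(t,x,y)=u\big(t_0,x-c(t-t_0),y\big),\qquad \bar v(t,x,y)=v\big(t_0,x-c(t-t_0),y\big),
\end{equation*}
together with $\bar P$ and $\bar\eta(t,x)=\eta(t_0,x-c(t-t_0))$, one has $\bar u_t=-c\bar u_x$ (and likewise for $\bar v$, $\bar\eta$), while the spatial derivatives of the barred fields at $(t,x,y)$ agree with those of the original fields at $(t_0,x-c(t-t_0),y)$. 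A term-by-term substitution using the frozen transport relations then shows that $(\bar u,\bar v,\bar P,\bar\eta)$ solves \eqref{eq:euler}--\eqref{eq:bed}: for example $\big(\bar u_t+\bar u\bar u_x+\bar v\bar u_y+\bar P_x\big)(t,x,y)=\big(u_t+uu_x+vu_y+P_x\big)(t_0,x-c(t-t_0),y)=0$ because $-c\,u_x(t_0,\cdot)=u_t(t_0,\cdot)$, and the kinematic and bed conditions are checked in the same way. Since $(\bar u,\bar v,\bar P,\bar\eta)$ coincides with $(u,v,P,\eta)$ at $t=t_0$, the assumed uniqueness of the water-wave initial value problem forces equality for all $t$; writing $U(z,y)=u(t_0,z+ct_0,y)$ and analogously for $v$, $P$, $\eta$, this means $u(t,x,y)=U(x-ct,y)$, i.e.\ a steady wave.

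I expect the main obstacle to be the reflection step across the free boundary: one must be sure that the Euler system and the kinematic condition are genuinely available both at $(t,x,y)$ and at its mirror point $(t,\xi,y)$ — which relies on the symmetry of the time-dependent fluid domain and of the surface profile — and that reflecting the surface conditions along $y=\eta(t,x)$ introduces no spurious boundary terms; this is where the symmetry of $\eta$ is used repeatedly. A secondary point worth noting is that the pressure need not be propagated by a transport equation of its own: in \eqref{eq:euler}--\eqref{eq:bed} the pressure is slaved to the velocity field, so it suffices to \emph{define} $\bar P$ by translation and verify that the equations hold. Finally, as emphasized in the Introduction, uniqueness for the initial value problem is taken as a standing hypothesis, and the argument above says nothing about existence or about the function space in which it ought to be formulated.
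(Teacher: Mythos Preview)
Your approach is sound and parallels the paper's proof closely. The overall scheme is identical: reflect the equations using the symmetry, combine the reflected system with the original one to extract time-independent information at a fixed $t_0$, build the traveling wave from the frozen data, and invoke uniqueness. The only real difference is in which linear combination you take. The paper \emph{subtracts} and obtains the \emph{steady} system directly,
\[
(u-\dot\lambda)u_x+vu_y=-P_x,\qquad (u-\dot\lambda)v_x+vv_y=-P_y-g,\qquad v=(u-\dot\lambda)\eta_x,
\]
so that the frozen data $(\bar u,\bar v,\bar P,\bar\eta)=(u,v,P,\eta)(t_0,\cdot)$ already solves the steady water-wave equations with speed $c=\dot\lambda(t_0)$, and the translated profile is then trivially a solution. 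You instead aim for the \emph{transport} relations $u_t+\dot\lambda u_x=0$, $v_t+\dot\lambda v_x=0$, $\eta_t+\dot\lambda\eta_x=0$ and use them to identify $\bar u_t$ with $u_t(t_0,\cdot)$, etc. These two routes are dual and equally effective.

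One small correction is needed: ``adding'' is not the uniform operation. Because $v$ is odd under reflection while $u$, $P$, $\eta$ are even, the reflected second Euler equation, expressed back at the original point, reads
\[
-v_t-2\dot\lambda v_x+uv_x+vv_y=-P_y-g.
\]
\emph{Adding} this to the unreflected $v$-equation gives the steady $v$-equation (the advection and pressure terms do \emph{not} cancel), whereas \emph{subtracting} gives your transport relation $v_t+\dot\lambda v_x=0$. Thus for $u$ and $\eta$ you add, for $v$ you subtract; the parity of each field dictates which combination kills the nonlinear and pressure terms. With that adjustment the transport relations you state are all valid and the rest of your argument goes through unchanged---it is essentially a reorganization of the paper's computation.
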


%%% REMARK: EULER %%%
\begin{rem}\label{rem:euler}
 The proof of Theorem~\ref{thm:euler} is carried out only for classical solutions as defined below, and for a particular setting of the Euler equatons. There is no doubt this can be generalized. As in the case of the Camassa--Holm equation, the proof relies upon well-posedness results, which---for the Euler equations---come in very many different forms. A pioneering paper in this area was  \cite{0892.76009}, written for deep-water waves.  For two-dimensional finite-depth gravity waves, suitable results are \cite{MR2138139} (irrotational waves) and \cite{MR2002401} (rotational waves).
\end{rem}

In the setting of the Euler equations, symmetry
has a somewhat extended meaning compared to the
model equations. 
\begin{definition}
 A solution $(u,v,P,\eta)$ is $x$-symmetric if
 there exists a function $\lambda \in
 C^1(\Real_+)$ such that for every $t>0$,
 \begin{equation} \label{eq:symm0}
   \begin{aligned}
     u(t,x,y) &= u(t,2\lambda(t)-x,y),\\ 
     v(t,x,y) &=- v(t,2\lambda(t)-x,y),\\ 
     P(t,x,y) &= P(t,2\lambda(t)-x,y),\\
     \eta(t,x) &= \eta(t,2\lambda(t) -x).
   \end{aligned}
 \end{equation} 
 for almost every $x\in\Real$.  We say that
 $\lambda(t)$ is the axis of symmetry.
\end{definition}

Typically, model equations approximate either the
free surface or the horizontal velocity along the
surface (or at some fixed depth). Therefore,
horizontal symmetry of the wave means evenness of
the corresponding solution. Let us prove
Theorem~\ref{thm:euler}.

%%% PROOF OF THEOREM: EULER %%%
\begin{proof}[Proof of Theorem~\ref{thm:euler}]
 From \eqref{eq:symm0}, we get
 \begin{equation} 
   \label{eq:symm1}
   \begin{aligned}  
     u_t(t,x,y)&=  u_t(t,2\lambda(t)-x,y)+2\dot\lambda(t) u_x(t,2\lambda(t)-x,y), \\
     u_x(t,x,y) &= -u_x(t,2\lambda(t)-x,y), \\
     u_y(t,x,y) &= u_y(t,2\lambda(t)-x,y),
   \end{aligned} 
 \end{equation}  
 which implies that
 \begin{equation}\label{eq:symm2}
   u_t(t,2\lambda(t)-x,y)= u_t(t,x,y)+2\dot\lambda(t) u_x(t,x,y).
 \end{equation}  
 If we start by considering the first Euler equation $u_t+uu_x+vu_y=-P_x$, and evaluate it at the point $(t,2\lambda(t)-x,y)$, and then use \eqref{eq:symm0}--\eqref{eq:symm2}  as well as  $P_x(t,x,y) = -P_x(t,2\lambda(t)-x,y)$ to return to the variables $(t,x,y)$, we find
 \begin{equation}\label{eq:symm3}
   u_t+2\dot\lambda(t) u_x-u u_x-v u_y= P_x,
 \end{equation}  
 all evaluated at the point $(t,x,y)$.
 By subtracting  $u_t+uu_x+vu_y=-P_x$ at $(t,x,y)$, we find
 \begin{equation}\label{eq:symm4}
   (u-\dot\lambda(t))u_x+v u_y=- P_x
 \end{equation}
 evaluated at the point $(t,x,y)$.  
 By doing the same operations on the second Euler equation, we find
 \begin{equation}\label{eq:symm5}
   (u-\dot\lambda(t))v_x+v v_y=- P_y-g
 \end{equation}  
 at the point $(t,x,y)$.
 For the boundary condition at $y=\eta$ we find
 \begin{equation}\label{eq:symm6}
   v(t,x,y)=(u(t,x,y)-\dot\lambda)\eta_x(t,x,y).
 \end{equation}  

 Fix a time $t_0$ and define $c=\dot\lambda(t_0)$.  Introduce functions
 \begin{equation} \label{eq:symm7}
   \begin{aligned}
     \bar u(x,y) &= u(t_0,x,y),\\ 
     \bar v(x,y) &=v(t_0,x,y),\\ 
     \bar P(x,y) &= P(t_0,x,y),\\
     \bar \eta(x) &= \eta(t_0,x).
   \end{aligned}
 \end{equation} 
 By definition these functions satisfy
 \begin{equation} \label{eq:symm8}
   \begin{aligned}
     (\bar u-c) \bar u_x+\bar v \bar u_y &= -\bar P_x,\\
     (\bar u-c) \bar v_x+\bar v \bar v_y &= -\bar P_y-g,\\ 
     \bar v &=(\bar u-c)\bar\eta, \quad \bar P=P_0 \text{  at $y=\bar\eta$},\\ 
     \bar v &= 0 \text{  at $y=-d$}.
   \end{aligned}
 \end{equation} 
 Finally define the functions
 \begin{equation} \label{eq:symm8a}
   \begin{aligned}
     \tilde u(t,x,y) &= \bar u(x-c(t-t_0),y),\\ 
     \tilde v(t,x,y) &= \bar v(x-c(t-t_0),y),\\ 
     \tilde P(t,x,y) &= \bar P(x-c(t-t_0),y),\\ 
     \tilde \eta(t,x) &= \bar \eta(x-c(t-t_0)).
   \end{aligned}
 \end{equation} 
 By construction we have
 \begin{multline}\label{eq:symm9}
   ( \tilde u(t_0,x,y), \tilde v(t_0,x,y),\tilde P(t_0,x,y),\tilde \eta(t_0,x))\\
   =(  u(t_0,x,y), v(t_0,x,y), P(t_0,x,y), \eta(t_0,x)),
 \end{multline}  
 and $\tilde u, \tilde v, \tilde P, \tilde \eta$ will satisfy the Euler equations. By uniqueness of the solution of the Euler equations we conclude that
 $(\tilde u,\tilde v, \tilde P, \tilde\eta)=(u, v, P, \eta)$ everywhere. 
\end{proof}

%%% SECTION: ASYMMETRIC %%%
\section{Existence of nonsymmetric linear waves}\label{sec:asymmetric}
Do two-dimensional steady gravity water waves and two-dimensional symmetric gravity water waves constitute one and the same class? By studying the linear steady solutions we shall see that, for constant vorticity, there is a strong case for this notion. However, we also find that there exist flows of nonconstant vorticity for which the linear problem admits nonsymmetric solutions. 

We note that symmetry is \emph{a priori} guaranteed for large classes of steady waves: irrotational periodic waves with a particular interior structure \cite{MR1734884}; irrotational solitary waves \cite{MR919444}; internal waves \cite{MR1416035}; and a major class of rotational waves  \cite{MR2256915}. The latter results was recently extended to include waves of arbitrary vorticity either carrying internal structure \cite{Hur2007}, or having a monotone surface profile between troughs and crests \cite{MR2362244}. Similar results are available also in the case of infinite depth. 

For waves of small amplitude, the linearized steady Euler equations provide a good understanding of the exact waves. Indeed the exact steady waves found in \cite{MR2027299} are, at least near the trivial flows, all perturbations of linear waves, and their properties have been found to match very well those of their linear approximations \cite{MR2412330}. 

In \cite{EV08} the deduction for steady waves linearized around a laminar flow is presented. Let $U(y)$ be a function describing the underlying current into which the wave propagates, that is, the profile $\{ U(y) \colon 0 \leq y \leq 1 \}$ is the velocity profile of a running stream with a flat surface. Then write a general solution of the steady Euler equations as a small perturbation of the running stream $U(y)$:
\begin{equation*}
 u = U + \varepsilon \tilde u, \quad v = \varepsilon \tilde v, \quad p = \varepsilon \tilde p.
\end{equation*}
By first inserting this into the steady Euler equations, and then letting $\varepsilon \to 0$, 
we obtain the linearized system  (where we have replaced $\tilde u$ by $u$, etc.)
\begin{subequations}\label{eqs:final}
 \begin{align}
   u_{x} + v_{y} &= 0,\\
   (U-c) u_{x} + v U_{y} &= - p_{x},\\
   (U-c) v_{x}   &= -p_{y},
 \end{align}
 valid for $0 < y < 1$, and
 \begin{equation}
   v = (U-c) \eta_{x}, \quad\text{ and }\quad p = \eta, 
 \end{equation}
 valid for $y = 1$. Moreover
 \begin{equation}
   v= 0
 \end{equation}
\end{subequations}
when $y = 0$. Here $\eta$ is the disturbance of the nondimensionalized flat surface $y = 1$, and $c$ is the group speed of the wave (the case when $U \equiv c$  admits only laminar flows). The disturbance of the surface should have a vanishing mean over each period,
\begin{subequations}\label{eq:normalization} 
 \begin{equation}
   \int_{-L/2}^{L/2} \eta(x)\,dx = 0,
 \end{equation}
 $L$ denoting wavelength, and $u$ should have no component depending only on $y$,
 \begin{equation}
   u(x,y) \equiv \int_0^x u_x(\xi,y)\,d\xi,
 \end{equation}
\end{subequations}
or equivalently $u(0,y)=0$, 
since that should be part of the background current $U(y)$. For such linear solutions, the following result holds:

%%% THEOREM: asymmetric %%%
\begin{thm}\label{thm:asymmetric}
 $ $
 \begin{itemize}
 \item[(i)] If the background current $U(y)$ is of constant vorticity,  ${U^{\prime\prime} = 0}$, and different from the wave speed, ${U(y) \neq  c}$ for all $y \in [0,1]$, then the linear problem admits only symmetric solutions. 
 \item[(ii)]
   There exists an $a.e.$ twice differentiable background current $U(y)$, such that the 
   the linearized problem has multiple solutions and, in particular, non-symmetric solutions.     
 \end{itemize}
\end{thm}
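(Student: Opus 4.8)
The plan is to treat the two parts separately, reducing each to an ordinary differential equation problem in the vertical variable $y$ obtained by Fourier expansion in $x$.

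For part (i), I would begin by eliminating the pressure from the linearized system \eqref{eqs:final}. Cross-differentiating the two momentum equations and using incompressibility $u_x + v_y = 0$ yields a single equation for $v$ of Rayleigh type; with $U''=0$ this reduces to $\Delta v = 0$ (or a constant-coefficient analogue after accounting for the $(U-c)$ factor, which is nonvanishing by hypothesis), so $v$ is essentially harmonic in the strip $0<y<1$. Expanding $v$ in a Fourier series in $x$, each mode $\hat v_k(y) e^{ikx}$ satisfies a second-order linear ODE in $y$ with the two boundary conditions: $\hat v_k(0)=0$ from \eqref{eq:bed}-type condition, and the coupled surface condition relating $\hat v_k(1)$, $\hat\eta_k$ and (after using the momentum equations) $\hat p_k(1)=\hat\eta_k$. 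The key point is that for $U''=0$ and $U\ne c$ the resulting two-point boundary value problem for each mode has a one-dimensional solution space (the dispersion relation selects, for each wavelength $L$, a discrete set of admissible $c$), and the solution at the surface is, up to a multiplicative constant, a single fixed profile. Fixing the normalization $\eqref{eq:normalization}$, one sees $\hat\eta_k$ and $\hat\eta_{-k}$ must be equal (the ODE and boundary data are real and even under $k\mapsto -k$), so $\eta$ and hence $u,v,p$ are even about some axis — i.e. the solution is symmetric. The main obstacle here is bookkeeping: carefully checking that the normalization conditions \eqref{eq:normalization} and the reality of the coefficients force the phase of each Fourier mode, so that no genuinely nonsymmetric combination survives.

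For part (ii), the strategy is the reverse: I would arrange a background current $U(y)$ for which the modal boundary value problem has a two-dimensional solution space for some fixed wavelength $L$ and speed $c$, so that two linearly independent modes (say $\cos$ and $\sin$ in $x$, i.e. $e^{ikx}$ with an unconstrained complex amplitude) are simultaneously admissible. Combining them with incommensurate phases produces a nonsymmetric solution. Concretely, after eliminating pressure the mode equation is the Rayleigh equation
\[
\big((U-c)(\hat v_k'' - k^2 \hat v_k) - U'' \hat v_k\big) = 0,\qquad 0<y<1,
\]
with $\hat v_k(0)=0$ and a homogeneous surface condition obtained by combining \eqref{eqs:final}. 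This is a Sturm--Liouville-type eigenvalue problem with $U$ as the unknown coefficient. By inverse Sturm--Liouville theory (as announced in the introduction), one can prescribe spectral data so that the relevant operator has an eigenvalue of geometric multiplicity at least two at the chosen $(k,c)$ — or, more simply, so that the Dirichlet-to-Neumann-type map entering the surface condition degenerates — yielding the desired multiple solutions. One then checks that the constructed $U$ can be taken twice differentiable almost everywhere (piecewise-$C^2$, with the weak formulation of the equations still making sense), and that among the resulting family of solutions there are ones with no axis of symmetry. The hard part will be the inverse spectral construction: producing an explicit (or explicitly-characterized) $U(y)$ for which the linearized surface problem is genuinely degenerate, while keeping $U$ admissible as a physical current profile and verifying the regularity claim.

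Throughout, a useful consistency check is Theorem~\ref{thm:euler}: since the linear waves constructed in (ii) are steady by construction, there is no contradiction — the obstruction to nonsymmetric waves lives at the nonlinear level, and the linearization around a carefully chosen non-affine current $U$ is where symmetry can first fail.
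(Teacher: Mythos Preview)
Your reduction in part~(i) is correct in spirit---eliminating the pressure and Fourier-expanding in $x$ does lead to the Sturm--Liouville problem the paper uses---but the decisive step is not that $\hat\eta_k$ and $\hat\eta_{-k}$ coincide (this is just reality of $\eta$, and a single real mode $a_k\cos(kx)+b_k\sin(kx)$ is always symmetric about its own crest regardless of phase). The point, which you gesture at but do not state, is that when $U''=0$ the dispersion relation allows at most \emph{one} integer wavenumber $|k|$; hence the kernel is one-dimensional, the solution is a single sinusoid, and after a translation it is even. Without this ``only one $k$'' observation the argument is incomplete.

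Part~(ii) has a genuine gap. You propose to engineer $U$ so that, at a fixed $k$, the modal boundary value problem has a two-dimensional solution space, i.e.\ an eigenvalue of geometric multiplicity two. But the modal problem is a \emph{regular} Sturm--Liouville problem on $[0,1]$, and such problems have only simple eigenvalues; moreover a single Fourier mode, however free its complex amplitude, is always symmetric about some axis, so ``combining $\cos$ and $\sin$ with incommensurate phases'' at one $k$ cannot produce asymmetry. The paper's mechanism is different: one uses inverse Sturm--Liouville theory (Dahlberg--Trubowitz) to find a potential $\alpha\in L^2([0,1])$ whose first $N+1$ eigenvalues are \emph{prescribed} to be $\lambda_k=-k^2$ for $k=0,1,\dots,N$ with $N\ge2$. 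This makes several \emph{distinct} wavenumbers simultaneously admissible, and a superposition such as $\sum_k f_k(y)\bigl(a_k\sin(kx)+b_k\cos(kx)\bigr)$ with at least two nonzero $b_k$ (subject to the single crest-normalization $\sum_k f_k(1)b_k=0$) is then genuinely nonsymmetric. One recovers $U$ by solving $U''=\alpha(y)(U-c)$ with initial data chosen to match the boundary constants. So the inverse-spectral idea is right, but it must be aimed at placing multiple simple eigenvalues at the values $-k^2$, not at forcing a degenerate eigenvalue.
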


\begin{rem}
 This has the following meaning for the full water wave problem: when the vorticity is constant, there are no asymmetric waves close to the trivial laminar solution. There exists, on the other hand, particular background currents that may allow for bifurcation from asymmetric kernels. Thus, one cannot at this point exclude the existence of symmetry-breaking bifurcations.  
\end{rem}

%%% PROOF OF THEOREM: asymmetric %%%
\begin{proof}[Proof of Theorem~\ref{thm:asymmetric}]
 By taking the curl of the linearized Euler equations, and
 by differentiating $p = \eta$ along the
 linearized surface $y=1$, it is easy to see that
 \begin{equation}\label{eq:vsystem}
   \begin{aligned}
     (U-c) \left(v_{xx} + v_{yy}\right) &= U_{yy} v, \qquad &0<y < 1,\\
     (1+(U-c)U_{y}) v &= (U-c)^2 v_{y}, \qquad	&y =1,\\
     v &= 0, \qquad &y=0.
   \end{aligned}  
 \end{equation}
 The system \eqref{eq:vsystem} is equivalent to
 \eqref{eqs:final} in the sense that if
 $(u,v,p,\eta)$ is a solution of the first
 system, then $v$ fulfills \eqref{eq:vsystem},
 and if $v$ is a solution of \eqref{eq:vsystem},
 then one can find $(u,p,\eta)$ such that
 \eqref{eqs:final} holds. While for a given $v$,
 a solution $u$ is only determined modulo
 functions $f(y)$, and $\eta$ up to a constant,
 the prescribed normalization
 \eqref{eq:normalization} leads to
 uniqueness. 

 By rescaling in $x$, and letting $\alpha(y) = U^{\prime\prime}(y)/(U(y)-c)$, we may consider the system
 \begin{align*}
   \Delta v &= \alpha(y) v, \qquad &0<y < 1,\\
   \mu_1 v &= \mu_2 v_{y}, \qquad	&y =1,\\
   v &= 0, \qquad &y=0,
 \end{align*}  
 where $\mu_1, \mu_2 \in \R$ with $\mu_1^2 + \mu_2^2 \neq 0$. 

 \bigskip
 \emph{Case (i).} We have that $\alpha \equiv 0$, hence $\alpha \in C^\beta((0,1))$, for any $\beta \in (0,1)$. Then necessarily $v \in C^{2,\beta}(\R \times (0,1))$ \cite{MR1814364}. That guarantees that the subsequent analysis makes sense pointwise. A Fourier series expansion $v = \sum_{k\in\Z} f_{k}(y) \exp(ikx)$ leads to
 \begin{equation}
   \begin{aligned}\label{eq:sturm}
     -f_{k}^{\prime\prime} + \alpha(y) f_{k} &= \lambda_k f_{k},\\
     \mu_1 f_{k}(1) &= \mu_2 f_{k}^\prime(1),\\
     f_{k}(0) &= 0,
   \end{aligned}
 \end{equation}
 with $\lambda_k = -k^2$. Then \eqref{eq:sturm} is a regular Sturm--Liouville problem, and according to standard theory \cite{MR894477} there exists a number $\lambda_{0}$ such that there are no eigenvalues $\lambda_k < \lambda_{0}$. We find that $f_k(y) = \sinh(ky)$ are the only possible solutions of \eqref{eq:sturm}.  It is easy to see from the boundary conditions that there is at most one integer $k$ admitting a solution, and this happens only if $U$ satisfies 
 \begin{equation}\label{eq:dispersion}
   \frac{(U(1)-c)^2}{1+(U(1)-c)U^\prime(1)} = \tanh{k^2},
 \end{equation}
 for some $k \in \Z$. Since traveling waves are invariant under translations in $x$, there is no loss of generality in requiring that $v(0,1) = 0$, meaning that at $x=0$ there is a crest or a trough. For running streams of constant vorticity the kernel of the linear problem thus is a one-dimensional family of solutions
 \[
 v(x,y) = \sin(kx) \sinh(ky),
 \]
 where $k$ is given by the dispersion relation \eqref{eq:dispersion}. It then follows from \eqref{eqs:final} that $(u,p,\eta)$ is an even function in $x$ so that the wave is symmetric.

 \bigskip
 \emph{Case (ii).}
 Consider the system \eqref{eq:sturm}, where $\alpha(y)$ is now unknown. We shall make use of the following result from inverse spectral theory.
 \begin{lemma}\cite{MR733718}\label{lemma:inverse}
   Fix $\mu_2 \neq 0$ and $\mu_1 \in \R$. A sequence of real numbers $\{\lambda_k\}_{k\geq 0}$ is the spectrum of a Sturm--Liouville problem \eqref{eq:sturm}, for some $\alpha \in L^2([0,1])$, if and only if $\lambda_k$ is an increasing sequence, and
   \[
   \lambda_k = \left(k+\frac{1}{2}\right)^2 \pi^2 + C + r_k, 
   \]
   for some $C \in \R$, $\{r_k\}_{k\geq 0} \in l^2(\R)$.
 \end{lemma}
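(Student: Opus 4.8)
The plan is to establish this classical inverse-spectral characterization of Dirichlet--Robin Schr\"odinger spectra on $[0,1]$ by treating the two implications separately: necessity through the large-$k$ asymptotics of the eigenvalues, and sufficiency through the analytic continuation argument behind \cite{MR733718} (the Gel'fand--Levitan method being an equivalent alternative).

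For necessity, I would fix $\alpha\in L^2([0,1])$ and let $\{\lambda_k\}_{k\ge0}$ be the spectrum of \eqref{eq:sturm}. Reality, simplicity and strict monotonicity to $+\infty$ are standard for a regular self-adjoint Sturm--Liouville problem. For the asymptotics, let $\phi(\dott,\lambda)$ solve $-\phi''+\alpha\phi=\lambda\phi$ with $\phi(0,\lambda)=0$, $\phi'(0,\lambda)=1$, set $z=\sqrt\lambda$, pass to the Volterra form of the equation and iterate once to obtain, uniformly for $z$ real and large,
\[
\phi(1,z^2)=\frac{\sin z}{z}-\frac{\cos z}{2z^2}\int_0^1\alpha+\frac{\theta_1(z)}{z^2},\qquad \phi'(1,z^2)=\cos z+\frac{\sin z}{2z}\int_0^1\alpha+\frac{\theta_2(z)}{z},
\]
where $\theta_1,\theta_2$ are oscillatory integrals of $\alpha$ of the type $\int_0^1\alpha(t)e^{\pm2izt}\,dt$; evaluated at $z\approx(k+\tfrac12)\pi$ these are $\ell^2$ in $k$ by Bessel's inequality and $\alpha\in L^2$. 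Substituting into the boundary relation $\mu_1\phi(1,\lambda)=\mu_2\phi'(1,\lambda)$ and solving perturbatively about the zeros of $\cos$ gives $\sqrt{\lambda_k}=(k+\tfrac12)\pi+c_\ast/((k+\tfrac12)\pi)+\rho_k/k$ with $c_\ast=\tfrac12\int_0^1\alpha-\mu_1/\mu_2$ and $\{\rho_k\}\in\ell^2$, hence $\lambda_k=(k+\tfrac12)^2\pi^2+C+r_k$ with $C=2c_\ast$ and $\{r_k\}\in\ell^2$.

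For sufficiency one must, conversely, exhibit \emph{some} $\alpha\in L^2$ whose spectrum is a prescribed increasing sequence of the stated form. I would run the continuation argument of \cite{MR733718}. The reference potential $\alpha\equiv0$ has spectrum (the roots of $\mu_1\sin z=\mu_2 z\cos z$) of the admissible type, so the set $\mathcal E$ of admissible sequences is nonempty, and by the necessity part the spectral map $\mathcal S\colon\alpha\mapsto(\lambda_k(\alpha))_k$ takes $L^2_\R([0,1])$ into $\mathcal E$. Then: \emph{(a)} $\mathcal S$ is real-analytic, since each $\lambda_k$ is a simple isolated eigenvalue, hence analytic in $\alpha$, and the asymptotics make the sequence-valued map analytic; \emph{(b)} by first-order perturbation theory $d\mathcal S_\alpha(h)=\big(\int_0^1 g_k^2\,h\big)_k$, with $g_k$ the $L^2$-normalized eigenfunctions; \emph{(c)} $d\mathcal S_\alpha$ is \emph{onto} the tangent space $\R\oplus\ell^2$ of $\mathcal E$, i.e.\ the moment problem $\int_0^1 g_k^2\,h=c_k$ is solvable for all admissible data --- this is the analytic heart: one uses the eigenfunction asymptotics $g_k^2=1-\cos(2(k+\tfrac12)\pi x)+(\ell^2\text{-controlled error})$ together with the fact that the moment map built from $\{\,1-\cos(2(k+\tfrac12)\pi x)\,\}$ is already surjective onto $\R\oplus\ell^2$, whereupon a perturbation/Fredholm argument transfers surjectivity to $d\mathcal S_\alpha$; hence $\mathcal S$ is a submersion and its image is open in $\mathcal E$; \emph{(d)} the image is closed, because convergence of $\mathcal S(\alpha_n)$ in $\mathcal E$ controls $\int\alpha_n$ via the trace formula and, with the a~priori estimates of \cite{MR733718} applied transversally to the (infinite-dimensional) isospectral sets, produces a limiting $\alpha$ with the prescribed spectrum; \emph{(e)} $\mathcal E$ is connected, so the nonempty open-and-closed image equals $\mathcal E$ --- the ``if'' direction. (Equivalently: append to $\{\lambda_k\}$ any admissible sequence of norming constants, form the spectral function, solve the Gel'fand--Levitan integral equation for the transformation kernel $K$, and read off $\alpha(x)=2\tfrac{d}{dx}K(x,x)$, where the $\ell^2$-decay of $\{r_k\}$ is exactly what forces $\alpha\in L^2$.)

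I expect the whole difficulty to lie in the sufficiency direction, and there in steps \emph{(c)} and \emph{(d)}: proving that $d\mathcal S_\alpha$ fills the entire tangent space --- the completeness and $\ell^2$-tameness of the squared eigenfunctions $\{g_k^2\}$ --- and that the range of $\mathcal S$ is closed. It is worth noting \emph{why} there is no shortcut: a single Dirichlet--Robin spectrum does not determine $\alpha$ (one needs a second spectrum or the norming constants, as in the Borg--Levinson theorem), so $\mathcal S$ is far from injective and the statement is, of necessity, one of realizability; this is why the argument is a surjectivity/continuation argument rather than the construction of an inverse, and why the Gel'fand--Levitan variant must begin with an arbitrary choice of norming data. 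Fed back into Case~(ii), one applies the lemma with a spectrum containing two distinct negative values $-k_1^2\ne-k_2^2$, $k_1,k_2\in\N$ (the remaining $\lambda_k$ taken large, increasing and of the stated form), recovers $\alpha$, and integrates $U''=\alpha(U-c)$ with initial data chosen so that $U\ne c$ throughout $[0,1]$; the two surviving Fourier modes $\sin(k_1x)f_{k_1}(y)$ and $\sin(k_2x)f_{k_2}(y)$ then span a space of solutions containing non-even ones.
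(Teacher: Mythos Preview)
The paper does not prove this lemma at all: it is quoted verbatim from Dahlberg--Trubowitz \cite{MR733718} and used as a black box in Case~(ii) of Theorem~\ref{thm:asymmetric}. So there is no ``paper's own proof'' to compare against.

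That said, your sketch is a faithful outline of the Dahlberg--Trubowitz argument itself. The necessity direction is standard and your treatment is correct: the Volterra iteration gives the stated asymptotics for $\phi$ and $\phi'$, the Robin condition at $y=1$ with $\mu_2\neq0$ forces $\sqrt{\lambda_k}$ near the zeros of $\cos$, and the $\ell^2$ tail comes out of Bessel's inequality applied to the oscillatory integrals of $\alpha$. For sufficiency you have correctly identified the structure of the original proof --- show the spectral map is a real-analytic submersion (surjectivity of $d\mathcal S_\alpha$ via the moment problem for $\{g_k^2\}$), then that its image is closed, then invoke connectedness of $\mathcal E$ --- and you are right that steps \emph{(c)} and \emph{(d)} carry essentially all the weight. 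Your alternative via Gel'fand--Levitan is also viable and is in some ways more constructive; the equivalence between the $\ell^2$ decay of $\{r_k\}$ and $\alpha\in L^2$ is precisely the content of the transformation-kernel estimates.

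One small caution: in your closing paragraph you describe feeding the lemma back into Case~(ii) by choosing ``two distinct negative values $-k_1^2\neq-k_2^2$'' and taking the remaining eigenvalues large. The paper instead fixes \emph{all} of $\lambda_0,\ldots,\lambda_N$ to be $-k^2$ for $k=0,\ldots,N$ and lets the tail be the free one $(k+\tfrac12)^2\pi^2$; either choice works for producing a multi-dimensional kernel, but note that the lemma requires the sequence to be \emph{increasing}, so one must reorder the first block --- a point the paper mentions and you should too if you carry your variant through.
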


 We may thus first fix $\mu_1 \in \R$, $\mu_2 >
 0$, and a finite $N \geq 2$. By choosing $C =
 0$, and
 \[
 \begin{cases}
   r_k := -k^2 - \left(k+\frac{1}{2}\right)^2 \pi^2, \qquad &k = 0,1, \ldots, N,\\
   r_k := 0, \qquad &k \geq N+1,
 \end{cases}
 \]  
 in Lemma~\ref{lemma:inverse}, we obtain $\lambda_k = -k^2$ for $k = 0,1 \ldots, N$, and conclude that for any finite number of $k$'s there is a function $\alpha(y)$, and nontrivial functions $f_k(y)$, satisfying \eqref{eq:sturm} (since $N$ is a finite number, there is no problem rearranging the first $N$ eigenvalues in increasing order). By defining a real function $v$ as a linear combination of such, we obtain that 
 \begin{equation}\label{eq:vgeneral}
   v(x,y) = \sum_{k=0}^N f_{k}(y) (a_k \sin(kx) + b_k \cos(kx)),
 \end{equation}
 where $N$ is always finite. The $a.e.$ twice
 differentiable function $v$ is then a solution
 of \eqref{eq:sturm} in the weak sense (it
 satisfies the equations pointwise $a.e.$).

 Furthermore, by solving the second order differential equation 
 \[
 U^{\prime\prime}(y) = \alpha(y) (U(y)-c), 
 \]
 with initial value conditions 
 \[
 U(1) = c + \sqrt{\mu_2} \qquad\text{ and }\qquad U^\prime(1) = \frac{\mu_1 -1}{\sqrt{\mu_2}},
 \]
 we recover the background current $U(y)$.  
 Let $x=0$  be the position of a
 crest or trough. It means that we
 impose $v(0,1)=0$, which is equivalent to
 \begin{equation}
   \label{eq:conbk}
   \sum_{k=0}^N f_k(1) b_k = 0.  
 \end{equation}
 If the solution is symmetric, then we have
 $v(x,y)=-v(-x,y)$, that is, for the solution of
 the form \eqref{eq:vgeneral} that we are
 considering,
 \begin{equation}
   \label{eq:condsym}
   \sum_{k=0}^N f_{k}(y) b_k \cos(kx)=0
 \end{equation}
 for all $x\in\Real$ and almost all
 $y\in[0,1]$. Since the function $f_k$ form a
 basis, \eqref{eq:condsym} implies that
 $b_k\cos(kx)=0$ for all $x$, which in turn
 implies that $b_k=0$. For $N\geq2$, it is clear
 that there exist $b_k$'s which satisfy
 \eqref{eq:conbk} and which are not all equal to
 zero and, for those $b_k$, the solution is not
 symmetric. Hence there exist asymmetric
 solutions of the linearized Euler equations.
\end{proof}

%%% BIBLIOGRAPHY %%%
{\small

}

\end{document}